\documentclass[12pt]{amsart}
\usepackage{amsfonts, amssymb, latexsym, array}
\usepackage{fullpage,color}
\usepackage{verbatim,euscript}

\tolerance=4000
\numberwithin{equation}{section}

\usepackage[colorlinks=true,linkcolor=blue,urlcolor=violet,citecolor=magenta]{hyperref}

\input {cyracc.def}
\font\tencyr=wncyr10 
\font\tencyi=wncyi10 
\font\tencysc=wncysc10 
\def\rus{\tencyr\cyracc}
\def\rusi{\tencyi\cyracc}
\def\rusc{\tencysc\cyracc}

\catcode`\@=\active
\catcode`\@=11

\renewcommand{\@cite}[2]{[{{\bf #1}\if@tempswa , #2\fi}]}
\renewcommand{\@biblabel}[1]{[{\bf #1}]\hfill}
\catcode`\@=12

\newtheorem{thm}{Theorem}[section]
\newtheorem{lm}[thm]{Lemma}
\newtheorem{prop}[thm]{Proposition}

\theoremstyle{remark}
\newtheorem{rmk}[thm]{Remark}

\theoremstyle{definition}
\newtheorem{ex}[thm]{Example}
\newtheorem{df}{Definition}


\newcommand {\ah}{{\mathfrak a}}

\newcommand {\g}{{\mathfrak g}}
\newcommand {\h}{{\mathfrak h}}

\newcommand {\es}{{\mathfrak s}}
\newcommand {\te}{{\mathfrak t}}


\newcommand {\slv}{{\mathfrak{sl}(\BV)}}

\newcommand {\spv}{{\mathfrak{sp}(\BV)}}

\newcommand {\sov}{{\mathfrak {so}}(\BV)}


\newcommand {\BV}{{\mathbb V}}

\newcommand {\BQ}{{\mathbb Q}}

\newcommand {\sfr}{\eus R}


\newcommand{\lb}{\lambda}

\renewcommand{\le}{\leqslant}
\renewcommand{\ge}{\geqslant}

\newcommand{\eus}{\EuScript}

\newcommand {\ad}{{\mathrm{ad}}}

\newcommand {\hot}{{\mathsf{ht}}}
\newcommand {\ind}{{\mathsf{ind}}}

\newcommand {\rk}{{\mathsf{rk}}}

\newcommand {\tr}{{\mathrm{tr\,}}}

\newcommand {\tri}{\mathfrak{sl}_2}

\newcommand {\GR}[2]{{\textrm{{\bf #1}}}_{#2}}

\newcommand {\ov}{\overline}

\newcommand {\vlb}{\BV_\lb}
\newcommand {\beq}{\begin{equation}}
\newcommand {\eeq}{\end{equation}}

\newcommand {\bbk}{\Bbbk}

\begin{document}
\setlength{\parskip}{3pt plus 5pt minus 0pt}
\hfill { {\scriptsize November 12, 2013}}
\vskip1ex

\title%
{The Dynkin index and $\tri$-subalgebras of simple Lie algebras}
\author[D.\,Panyushev]{Dmitri I.~Panyushev}
\address[]{
Institute for Information Transmission Problems of the R.A.S., Bol'shoi Karetnyi per. 19, Moscow 127994
\ Russia 
}
\email{panyushev@iitp.ru}
\subjclass[2010]{17B20 (17B10, 17B22)}
\maketitle

\section*{Introduction}
\noindent
The ground field $\bbk$ is algebraically closed and of characteristic zero.
Let $G$ be a connected semisimple algebraic group with Lie algebra $\g$. 
In~1952, Dynkin classified all semisimple subalgebras of semisimple Lie algebras~\cite{dy}.
As a tool to distinguish different (non-conjugate) embeddings of the same algebra, Dynkin introduced 
the {\it index of a homomorphism of simple Lie algebras}. It will be convenient for us to split this
into the notions of (1) the index of a simple subalgebra of a simple Lie algebra and (2) the index of
a representation of a simple Lie algebra. 
After Mal'cev and Kostant, it is known that the conjugacy classes of the $\tri$-subalgebras of $\g$ are in 
a one-to-one correspondence with the nonzero nilpotent $G$-orbits in $\g$~\cite[3.4]{CM}. Therefore, 
one can define the index of a nilpotent element (orbit) as the Dynkin index of any associated 
$\tri$-subalgebra. As nilpotent orbits 
are related to the variety of intriguing problems in representation theory,
the indices of $\tri$-subalgebras of $\g$ are most interesting for us.
A simple Lie algebra has three distinguished nilpotent orbits: the principal (regular), subregular, and the 
minimal  ones. It was noticed by Dynkin that in the last case the corresponding $\tri$-index equals $1$ 
(cf.~\cite[Theorem\,2.4]{dy}). In~\cite{D-ind}, we gave a general formula for the index of a principal 
$\tri$-subalgebra of $\g$. 

This note can be regarded as a continuation of \cite{D-ind}.
Here we provide simple formulae for the index of all nilpotent orbits ($\tri$-subalgebras) in the classical 
Lie algebras~(Theorem~\ref{thm:all-classical}) and a new formula for the index of the principal 
$\tri$~(Theorem~\ref{thm:new-main}).
Then we compute the difference, $D$, of the indices of principal and subregular $\tri$-subalgebras. Our formula for $D$ involves some data related to the McKay correspondence for $\g$, see Theorem~\ref{thm:difference} and Eq.~\eqref{eq:another-D}. The index of a simple subalgebra $\es$ of $\g$,
$\ind(\es\hookrightarrow\g)$, can be computed via any non-trivial representation of $\g$, and taking different representations of $\g$, one gets different formal expression for $\ind(\es\hookrightarrow\g)$.
For $\es\simeq\tri$ and classical $\g$, we obtain essentially different formulae using the simplest and adjoint representations of $\g$, and the Jordan normal form of nonzero nilpotent elements of $\es$. This yields three series of interesting combinatorial identities parameterised by partitions, see 
Section~\ref{subs:classical}. We also prove that the index of a nilpotent orbit strictly decreases under the passage to the boundary of orbits (Proposition~\ref{cor:ind-ubyvanie}).

\section{The Dynkin indices of representations and subalgebras} 
\label{sect:di}

\noindent
Let $\g$ be a simple finite-dimensional Lie algebra of rank $n$. Let $\te$ be a Cartan
subalgebra, and $\Delta$ the set of roots of $\te$ in $\g$. Choose a set of positive roots
$\Delta^+$ in $\Delta$. Let $\Pi$ be the set of simple roots and $\theta$ the highest root in
$\Delta^+$. As usual, $\rho=\frac{1}{2}\sum_{\gamma>0}\gamma$.
The $\BQ$-span of all roots is a $\BQ$-subspace of $\te^*$, denoted $\eus E$.
Following Dynkin, we normalise a non-degenerate invariant symmetric bilinear form
$(\ ,\ )_\g$ on $\g$ 
as follows. The restriction of $(\ ,\ )_\g$ to $\te$ is 
non-degenerate, hence it induces the isomorphism of $\te$ and $\te^*$ and 
a non-degenerate bilinear form on $\eus E$. We then require that $(\theta,\theta)_\g=2$,
i.e.,  $(\beta,\beta)_\g=2$ for any {long} root $\beta$ in $\Delta$.

\begin{df}[Dynkin~{\cite[\S\,2]{dy}}]  \label{def:main} 
Let $\phi:\es\to\g$ be a homomorphism of simple Lie algebras. For $x,y\in\es$, the bilinear form $(x,y)\mapsto
(\phi(x),\phi(y))_\g$ is proportional to $(\ ,\ )_\es$ and 
the index of $\phi$ is defined by the equality 
$
   (\phi(x),\phi(y))_\g=\ind(\es\stackrel{\phi}{\to}\g){\cdot}(x,y)_\es, \ x,y\in\es$.
\end{df}
\noindent 
\textbullet \quad In particular, if $\es$ is a simple subalgebra of  $\g$, then
the {\it Dynkin index of $\es$ in $\g$\/} 
is   
\[
   \ind(\es\hookrightarrow\g):=\displaystyle \frac{(x,x)_\g}{(x,x)_\es}, \quad  x\in\es . 
\]
\noindent 
\textbullet \quad 
If $\nu: \g\to \mathfrak{sl}(M)$  is a representation of $\g$, then the {\it Dynkin index of
the representation\/} $\nu$, denoted $\ind_D(\g,M)$ or $\ind_D(\g,\nu)$, is defined by
\begin{equation}   \label{eq:ind_D}
  \ind_D(\g,M):=\ind(\g\stackrel{\nu}{\to}\mathfrak{sl}(M)) .
\end{equation}

\noindent
It is not hard to verify that, for the simple Lie algebra $\mathfrak{sl}(M)$, the normalised bilinear form
is given by  $(x,x)_{\slv}= \tr(x^2)$, $x\in\mathfrak{sl}(M)$.
Therefore, a more explicit expression for the Dynkin index of a representation 
$\nu: \g\to \mathfrak{sl}(M)$ is
\[ 
  \ind_D(\g,M)=\frac{\tr \bigl(\nu(x)^2\bigr)}{(x,x)_\g} .
\] 
The following properties easily follow from the definition:

{\sf Multiplicativity}: \ 
 If $\h\subset\es\subset\g$ are simple Lie algebras, then 
\[
\ind(\h\hookrightarrow\es){\cdot}\ind(\es\hookrightarrow\g)=\ind(\h\hookrightarrow\g) .
\]

{\sf Additivity}:  \ $\ind_D(\g, M_1\oplus M_2)=\ind_D(\g, M_1)+\ind_D(\g, M_2)$. \\
It is therefore sufficient to determine $\ind_D(\g, \cdot )$ for the irreducible representations.

\begin{thm}[Dynkin, \protect{\cite[Theorem\,2.5]{dy}}]   \label{thm:d-1952}
Let $\vlb$ be a  simple finite-dimensional $\g$-module with highest weight $\lb$. 
Then 
\[
   \ind_D(\g, \vlb)=\frac{\dim \vlb}{\dim\g} (\lb,\lb+2\rho)_\g .
\]
\end{thm}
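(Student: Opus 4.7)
The plan is to use the quadratic Casimir element of $\g$ and its trace on $\vlb$.

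First I would observe that the defining relation of the Dynkin index extends from the form on $\es$ (here played by $\g$ itself, with $\es=\g$, $\nu=\phi$) by polarisation: the identity $\tr\bigl(\nu(x)^2\bigr)=\ind_D(\g,\vlb)\,(x,x)_\g$ for all $x\in\g$ implies, by replacing $x$ with $x+y$, that
\[
   \tr\bigl(\nu(x)\nu(y)\bigr)=\ind_D(\g,\vlb)\,(x,y)_\g\qquad \text{for all }x,y\in\g.
\]
This bilinear version is what couples cleanly with dual-basis sums.

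Next I would introduce dual bases $\{x_i\}$ and $\{y_i\}$ of $\g$ with respect to $(\ ,\ )_\g$, so that $(x_i,y_j)_\g=\delta_{ij}$ and in particular $\sum_i(x_i,y_i)_\g=\dim\g$. The Casimir element $C_\g=\sum_i x_iy_i\in U(\g)$ is independent of the choice of dual bases and lies in the centre. By Schur's lemma, $\nu(C_\g)$ acts on the irreducible module $\vlb$ as a scalar. A standard computation (evaluate on a highest-weight vector via the triangular decomposition, using that $\rho$ is the half-sum of positive roots) shows that this scalar is precisely $(\lb,\lb+2\rho)_\g$.

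Now I would take the trace of $\nu(C_\g)$ on $\vlb$ in two ways. On the one hand,
\[
   \tr_{\vlb}\bigl(\nu(C_\g)\bigr)=(\lb,\lb+2\rho)_\g\cdot\dim\vlb .
\]
On the other hand,
\[
   \tr_{\vlb}\bigl(\nu(C_\g)\bigr)=\sum_i\tr\bigl(\nu(x_i)\nu(y_i)\bigr)=\ind_D(\g,\vlb)\sum_i(x_i,y_i)_\g=\ind_D(\g,\vlb)\cdot\dim\g.
\]
Equating the two expressions and solving for $\ind_D(\g,\vlb)$ gives the claimed formula.

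The only non-formal ingredient is the value $(\lb,\lb+2\rho)_\g$ of the Casimir eigenvalue on $\vlb$, which is completely standard; everything else is bookkeeping with dual bases and the bilinear form, so I do not anticipate a real obstacle — the slight subtlety is simply to make sure that the same normalisation $(\theta,\theta)_\g=2$ is used both in the definition of the index and in the Casimir computation, so that no ambiguous scaling factor sneaks in.
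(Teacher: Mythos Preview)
Your argument is correct and is in fact the standard Casimir--element proof of Dynkin's formula. Note, however, that the paper does not supply its own proof of this statement: it is quoted as Dynkin's theorem with a reference to \cite[Theorem\,2.5]{dy}, so there is nothing in the paper to compare your approach against beyond the citation itself.
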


\noindent
Although it is not obvious from the definition, the Dynkin index of a homomorphism is an 
integer~\cite[Theorem\,2.2]{dy}. Dynkin's original proof relied on classification results.
In 1954,  he gave a better proof that is based on  a topological interpretation of the index.
A short algebraic proof is given in \cite[Ch.\,I, \S 3.10]{on}.

Conversely,  the index of a simple subalgebra can be expressed via indices of representations. 
By the multiplicativity of index and Eq.~\eqref{eq:ind_D}, 
for a simple subalgebra $\es\subset\g$ and a non-trivial representation 
$\nu:\g\to \mathfrak{sl}(M)$, we have
\beq     \label{eq:various-V}
     \ind(\es\hookrightarrow\g)=
     \frac{\ind(\es\hookrightarrow \mathfrak{sl}(M))}{\ind(\g\hookrightarrow \mathfrak{sl}(M))}=
     \frac{\ind_D(\es,M)}{\ind_D(\g,M)} .
\eeq
A nice feature of this formula is that one can use various $M$ to compute the index of a given 
subalgebra.

\begin{ex}   \label{ex:1}
{\ }\phantom{\ }\\
(1) \ Let $\sfr_d$ be the simple $\tri$-module of dimension $d+1$. Then
$\ind_D(\tri,\sfr_d)= \genfrac{(}{)}{0pt}{}{d+2}{3}$.

\noindent
(2) \ Recall that $\theta$ is the highest root in $\Delta^+$.
By Theorem~\ref{thm:d-1952}, 
\[
  \ind_D(\g,\ad_\g)=(\theta,\theta+2\rho)_\g=
(\theta,\theta)_\g(1+(\rho,\theta^\vee)_\g)=2(1+(\rho,\theta^\vee)_\g) .
\]
\end{ex}
\noindent
Note that $(\rho,\theta^\vee)_\g$ does not depend on the normalisation of the bilinear form on 
$\eus E$. The integer $1+(\rho,\theta^\vee)_\g$ is customary called the {\it dual Coxeter number\/}
of $\g$, and we denote it by $h^*(\g)$. 
Thus, $\ind_D(\g,\ad_\g)=2h^*(\g)$.
In the simply-laced case, $h^*(\g)=h(\g)$---the usual
Coxeter number. For the other simple Lie algebras, we have
$h^*(\GR{B}{n})=2n{-}1$, $h^*(\GR{C}{n})=n{+}1$, $h^*(\GR{F}{4})=9$,
$h^*(\GR{G}{2})=4$.
Applying this to Eq.~\eqref{eq:various-V} with $M=\g$ and $\nu=\ad_\g$, we obtain
\beq    \label{eq:ind-subalg2}
    \ind(\es\hookrightarrow\g)=\frac{1}{2h^*(\g)}{\cdot}\ind_D(\es,\g) \ .
\eeq
More generally, we have
\begin{lm}   \label{lm:new}
If\/ $\es\subset\g$ are simple Lie algebras and $\nu:\g\to\mathfrak{sl}(M)$ is a representation, then
\[
  \ind_D(\es,M)=\frac{1}{2h^*(\g)}{\cdot}\ind_D(\es,\g){\cdot}\ind_D(\g,M).
\] 
\end{lm}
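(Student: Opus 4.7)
The plan is to combine two ingredients that are already on the table: the multiplicativity of the Dynkin index along a chain of simple subalgebras, and the special formula \eqref{eq:ind-subalg2} that expresses $\ind(\es\hookrightarrow\g)$ in terms of $\ind_D(\es,\g)$ and the dual Coxeter number $h^*(\g)$.

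First I would consider the composition of homomorphisms
\[
\es \hookrightarrow \g \stackrel{\nu}{\longrightarrow} \mathfrak{sl}(M),
\]
where I may assume $\nu$ is non-trivial (otherwise both sides of the claimed identity vanish and there is nothing to prove). Applying multiplicativity of the index to this chain gives
\[
\ind(\es\hookrightarrow \mathfrak{sl}(M)) \;=\; \ind(\es\hookrightarrow\g)\cdot\ind(\g\stackrel{\nu}{\to}\mathfrak{sl}(M)).
\]
By the very definition in \eqref{eq:ind_D}, the left-hand side equals $\ind_D(\es,M)$ (where $\es$ acts on $M$ via $\nu\circ(\es\hookrightarrow\g)$), and the second factor on the right equals $\ind_D(\g,M)$.

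Next I would substitute the expression \eqref{eq:ind-subalg2}, namely
$\ind(\es\hookrightarrow\g)=\tfrac{1}{2h^*(\g)}\ind_D(\es,\g)$, into the identity above. This immediately yields
\[
\ind_D(\es,M) \;=\; \frac{1}{2h^*(\g)}\cdot\ind_D(\es,\g)\cdot\ind_D(\g,M),
\]
which is the asserted formula. There is essentially no obstacle here: the content of the lemma is just that the formula \eqref{eq:ind-subalg2} -- originally derived using $\nu=\ad_\g$ -- transforms in the expected way under an arbitrary representation $\nu$ of $\g$, and multiplicativity of the Dynkin index together with \eqref{eq:various-V} makes this transparent.
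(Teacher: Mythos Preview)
Your proof is correct and follows exactly the same route as the paper's own argument: apply multiplicativity of the index to the chain $\es\hookrightarrow\g\stackrel{\nu}{\to}\mathfrak{sl}(M)$ and then substitute \eqref{eq:ind-subalg2}. The paper simply compresses this into a single displayed line.
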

\begin{proof}
By the multiplicativity and  Eq.~\eqref{eq:ind-subalg2}, we have 
\[
  \ind_D(\es,M)=\ind(\es\hookrightarrow\g){\cdot}\ind(\g\hookrightarrow\mathfrak{sl}(M))=
  \frac{1}{2h^*(\g)}{\cdot}\ind_D(\es,\g){\cdot}\ind_D(\g,M) . \qedhere
\]
\end{proof}
\begin{rmk}   \label{rem:strange}
The ``strange formula'' of Freudenthal-de~Vries relates the scalar square of $\rho$ with $\dim\g$.
If $\langle\ ,\ \rangle$ is the {\it canonical\/} bilinear form on $\eus E$ with respect to $\Delta$, then
$\langle\rho,\rho\rangle=\dim\g/24$ \cite[47.11]{FdV}. The canonical bilinear form is characterised by the 
property that $\langle\gamma,\gamma\rangle=1/h^*(\g)$ for a {\it long\/} root $\gamma\in\Delta$. It follows 
that if $(\ ,\ )$ is any nonzero $W$-invariant bilinear form on $\eus E$ and $(\gamma,\gamma)=c$, then 
$(\rho, \rho)=\displaystyle\frac{\dim\g}{24}h^*(\g)c$.
\end{rmk}

\section{The index of  $\tri$-subalgebras and combinatorial identities} 
\label{sect:all-sl(2)}

\noindent
If $e\in\g$ is nonzero and nilpotent, then there exists a subalgebra $\ah\subset\g$ such that 
$\ah\simeq\tri$ and $e\in\ah$ (Morozov, Jacobson)\cite[3.3]{CM}. All $\tri$-subalgebras associated with 
a given $e$ are $G_e$-conjugate and we write $\GR{A}{1}(e)$ for such a subalgebra. In this section, 
we give explicit formulae for the indices $\ind(\GR{A}{1}(e)\hookrightarrow \g)$ and some applications.

Let $\g(\BV)$ be a classical simple Lie algebra (i.e., one of $\slv$, $\spv$, $\sov$). The nilpotent 
elements (orbits) in $\g(\BV)$ are parameterised by partitions of $\dim\BV$, and we give the formulae 
in terms of partitions.
For $e\in\g(\BV)$, let $\boldsymbol{\lb}(e)=(\lb_1,\lb_2,\dots)$ be the corresponding partition.
For $\spv$ or $\sov$, $\boldsymbol{\lb}(e)$ satisfies certain parity conditions~\cite{hess},\cite[5.1]{CM}, which are 
immaterial at the moment. And, of course, $\dim\BV$ is even in the symplectic case.

\begin{thm}   \label{thm:all-classical}   
For a nonzero nilpotent $e\in\g(\BV)$, with partition $\boldsymbol{\lb}(e)$, we have
\begin{itemize}
\item[\sf (i)] \ $\ind(\GR{A}{1}(e)\hookrightarrow \slv)=\ind(\GR{A}{1}(e)\hookrightarrow \spv)=
\sum_i \genfrac{(}{)}{0pt}{}{\lb_i+1}{3}$;
\item[\sf (ii)] \ $\ind(\GR{A}{1}(e)\hookrightarrow \sov)=
\frac{1}{2}\sum_i \genfrac{(}{)}{0pt}{}{\lb_i+1}{3}$.
\end{itemize}
\end{thm}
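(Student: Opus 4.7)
The plan is to exploit the formula \eqref{eq:various-V} applied to the defining representation $\mathbb{V}$ of each classical Lie algebra $\g(\BV)$. The key observation is that the $\es\mathrel{:=}\GR{A}{1}(e)$-module structure on $\BV$ is completely determined by the partition $\boldsymbol{\lb}(e)$: by Jacobson--Morozov, one includes $e$ into an $\tri$-triple and then the standard theory of $\tri$-representations gives the decomposition
\[
   \BV \simeq \bigoplus_i \sfr_{\lb_i-1}
\]
as an $\es$-module, where $\dim\sfr_{\lb_i-1}=\lb_i$ matches the block sizes of $e$ in Jordan normal form.

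First I would compute $\ind_D(\es,\BV)$ by additivity together with Example~\ref{ex:1}(1). Since $\ind_D(\tri,\sfr_d)=\genfrac{(}{)}{0pt}{}{d+2}{3}$, additivity yields
\[
   \ind_D(\es,\BV)=\sum_i \ind_D(\tri,\sfr_{\lb_i-1})=\sum_i \genfrac{(}{)}{0pt}{}{\lb_i+1}{3}.
\]
Then \eqref{eq:various-V} with $M=\BV$ gives
\[
  \ind(\es\hookrightarrow \g(\BV))=\frac{\ind_D(\es,\BV)}{\ind_D(\g(\BV),\BV)},
\]
so everything reduces to computing the Dynkin index of the defining representation of each classical $\g(\BV)$.

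Next I would compute these three numbers via Theorem~\ref{thm:d-1952}. For $\g=\slv$ with $\lb=\omega_1$, one has $\dim\vlb=\dim\BV$ and $(\lb,\lb+2\rho)_\g$ evaluates against $\dim\g=(\dim\BV)^2-1$ to give $\ind_D(\slv,\BV)=1$. For $\spr$ with $\lb=\omega_1=\esi_1$, using $(\esi_1,\esi_1)_\g=1/2$ (forced by the normalisation $(\theta,\theta)_\g=2$ since $\theta=2\esi_1$) and $\rho=\sum(n+1-i)\esi_i$, one finds $(\omega_1,\omega_1+2\rho)_\g=(2n+1)/2$, so that $\ind_D(\spr,\BV)=\frac{2n}{n(2n+1)}\cdot\frac{2n+1}{2}=1$. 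For $\son$, the highest root $\theta=\esi_1+\esi_2$ is long, which forces $(\esi_i,\esi_i)_\g=1$; then $(\omega_1,\omega_1+2\rho)_\g$ works out so that $\ind_D(\son,\BV)=2$ (both for $B$- and $D$-type, treated uniformly). Substituting these three normalising constants into the quotient above gives exactly \textsf{(i)} and \textsf{(ii)}.

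The only subtle point will be the verification of $\ind_D(\g(\BV),\BV)$ for the three classical series. These are essentially well-known (equivalently: the defining bilinear forms $\tr(xy)$ differ from the normalised Killing-type form $(\ ,\ )_\g$ by factors $1$, $1$, $2$ respectively), but I would record the computation via Theorem~\ref{thm:d-1952} for uniformity. No non-trivial case analysis on $e$ is needed beyond the $\tri$-decomposition of $\BV$, so the argument is short.
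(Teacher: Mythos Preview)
Your proposal is correct and follows essentially the same route as the paper: decompose $\BV$ over $\GR{A}{1}(e)$ according to the Jordan blocks, compute $\ind_D(\es,\BV)=\sum_i\genfrac{(}{)}{0pt}{}{\lb_i+1}{3}$ by additivity and Example~\ref{ex:1}(1), and then divide by $\ind_D(\g(\BV),\BV)\in\{1,1,2\}$. The only cosmetic difference is that the paper phrases the $\spv$ and $\sov$ cases as multiplicativity through $\slv$ rather than as a direct application of~\eqref{eq:various-V}, which amounts to the same computation.
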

\begin{proof}
In all cases, we have $\BV\vert_{\GR{A}{1}(e)}=\oplus_i \sfr_{\lb_i-1}$.

(i) By formulae of Section~\ref{sect:di}, we have
\[
  \ind(\GR{A}{1}(e)\hookrightarrow \slv)=\ind_D(\GR{A}{1}(e),\BV)=
  \sum_i \ind_D(\GR{A}{1}(e),\sfr_{\lb_i-1})=\sum_i \genfrac{(}{)}{0pt}{}{\lb_i+1}{3}.
\] 
By the multiplicativity of the index, 
\[
   \ind(\GR{A}{1}(e)\hookrightarrow \slv)=\ind(\GR{A}{1}(e)\hookrightarrow \spv){\cdot}
   \ind(\spv\hookrightarrow \slv).
\]
Using 
Theorem~\ref{thm:d-1952}, one easily computes that 
$\ind(\spv\hookrightarrow \slv)=\ind_D(\spv,\BV)=1$.

(ii) Likewise, we use the fact that $\ind(\sov\hookrightarrow \slv)=\ind_D(\sov,\BV)=2$.
\end{proof}

For the exceptional Lie algebras, Dynkin already computed the index for all $\tri$-subalgebras 
\cite[Tables\,16--20]{dy}. His calculations can be verified as follows.
{\sl First}, for any nilpotent element $e\in\g$, the Jordan normal formal of $e$ in the 
simplest representation of $\g$ is determined in~\cite{law95}. {\sl Second}, using Theorem~\ref{thm:d-1952}, one obtains that the indices of the embeddings associated with the simplest representations of exceptional Lie algebras are:
\begin{center}
\begin{tabular}{lll}
$\ind(\GR{E}{6}\hookrightarrow \mathfrak{sl}_{27})=6$; &
$\ind(\GR{E}{7}\hookrightarrow \mathfrak{sp}_{56})=12$; &
$\ind(\GR{E}{8}\hookrightarrow \mathfrak{s0}_{248})=30$; \\
$\ind(\GR{F}{4}\hookrightarrow \mathfrak{so}_{26})=3$;  &
$\ind(\GR{G}{2}\hookrightarrow \mathfrak{so}_{7})=1$. & 
\end{tabular}
\end{center}
Combining these data with formulae of Theorem~\ref{thm:all-classical}, one readily computes the 
indices of all $\tri$-subalgebras.

\begin{prop}   \label{cor:ind-ubyvanie}
If $e,e'\in\g$ are nilpotent and $Ge'\subset\ov{Ge}\setminus Ge$, then 
\[
\ind(\GR{A}{1}(e')\hookrightarrow \g) <\ind(\GR{A}{1}(e)\hookrightarrow \g) .
\]
\end{prop}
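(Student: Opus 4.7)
The plan is to split according to whether $\g$ is classical or exceptional and in each case use the explicit index data already available. For classical $\g = \g(\BV)$, Theorem~\ref{thm:all-classical} gives $\ind(\GR{A}{1}(e)\hookrightarrow\g) = c\cdot F(\boldsymbol{\lb}(e))$, where $F(\boldsymbol{\lb}(e)) := \sum_i\binom{\lb_i+1}{3}$ and $c\in\{1,1/2\}$; for exceptional $\g$, every $\tri$-index is listed in Dynkin's Tables~16--20 of~\cite{dy}.

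For the classical case, I would invoke the standard description of the nilpotent orbit closure order: $Ge'\subset\ov{Ge}$ if and only if $\boldsymbol{\lb}(e')\le\boldsymbol{\lb}(e)$ in the dominance order on partitions (Gerstenhaber for $\slv$, Hesselink for $\spv$ and $\sov$). The hypothesis $Ge'\subset\ov{Ge}\setminus Ge$ therefore gives $\boldsymbol{\lb}(e')\le\boldsymbol{\lb}(e)$ with $\boldsymbol{\lb}(e')\neq\boldsymbol{\lb}(e)$; in type D a ``very even'' partition corresponds to two orbits, but those two share the same dimension and hence cannot lie in each other's closure, so the hypothesis excludes that degeneracy. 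I would then apply the strict form of the Hardy--Littlewood--P\'olya inequality: if $\phi$ is strictly convex on $[0,\infty)$ and $\mu\le\lambda$ in the dominance order with $\mu\neq\lambda$ and all parts nonnegative, then $\sum_i\phi(\mu_i)<\sum_i\phi(\lambda_i)$. Since $\phi(x)=\binom{x+1}{3}=(x^3-x)/6$ has $\phi''(x)=x>0$ on $(0,\infty)$, strict convexity holds, so $F(\boldsymbol{\lb}(e'))<F(\boldsymbol{\lb}(e))$, and the classical case is complete.

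For exceptional $\g$ I would finish by directly inspecting, in each of $\GR{E}{6}$, $\GR{E}{7}$, $\GR{E}{8}$, $\GR{F}{4}$, $\GR{G}{2}$, the Hasse diagram of the nilpotent orbit closure poset (see e.g.\ Collingwood--McGovern, Chapter~8) and verifying the strict inequality along every covering edge against Dynkin's tabulated indices.

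The main potential obstacle is the reliance on the Hesselink--Gerstenhaber description of the closure order in the classical case. A uniform alternative would use the identity $\ind(\GR{A}{1}(e)\hookrightarrow\g)=(h,h)_\g/2$, where $h$ is the semisimple element of a standard $\tri$-triple containing $e$, and attempt to show directly that the Weyl-invariant function $e\mapsto(h,h)_\g$ is strictly lower-semicontinuous along $G$-orbit closures; but proving that monotonicity in a classification-free way appears to require comparable case analysis, so the split strategy above seems to be the most efficient route.
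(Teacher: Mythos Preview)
Your argument is correct, but the paper takes a different and more uniform route. Instead of splitting into classical versus exceptional, the paper first treats only $\g=\slv$ (checking directly that $F(\boldsymbol{\lb})$ drops under each covering move in the dominance order, which is equivalent to your Hardy--Littlewood--P\'olya argument), and then reduces \emph{every} other simple $\g$ to this case in one stroke: pick any nontrivial representation $\nu:\g\to\slv$, observe that $Ge'\subset\ov{Ge}\setminus Ge$ forces $SL(\BV)e'\subsetneq\ov{SL(\BV)e}$ (the inequality $SL(\BV)e'\ne SL(\BV)e$ follows from Richardson's theorem that each irreducible component of $SL(\BV)e\cap\g$ is a single $G$-orbit), and then apply the $\slv$ case together with the multiplicativity $\ind(\GR{A}{1}(e)\hookrightarrow\g)=\ind(\GR{A}{1}(e)\hookrightarrow\slv)/\ind(\g\hookrightarrow\slv)$. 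This buys a clean, classification-free treatment of the exceptional types and of $\spv,\sov$ simultaneously, at the cost of invoking Richardson's result; your approach avoids Richardson but pays for it with the case-by-case verification of all covering edges in the five exceptional Hasse diagrams. Your use of the majorization inequality is a tidy replacement for the paper's explicit check of the two covering moves in $\slv$.
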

\begin{proof} First, we prove this for $\g=\slv$, and then derive the general assertion.

1) $\g=\slv$. It suffices to consider the case in which $Ge'$ is dense in an irreducible component of
$\ov{Ge}\setminus Ge$. 

Here $\boldsymbol{\lb}(e')$ is obtained from $\boldsymbol{\lb}(e)$ via one
of the following procedures. If $\lb_i\ge \lb_{i+1}+2$, then 
$(\dots,\lb_i,\lb_{i+1},\dots)$ can be replaced with $(\dots,\lb_i-1,\lb_{i+1}+1,\dots)$. Or, a fragment
$(\dots, a+1,\underbrace{a,\dots,a}_k, a-1,\dots)$ in $\boldsymbol{\lb}(e)$ can be replaced with
$(\dots, \underbrace{a,\dots,a}_{k+2},\dots)$ \cite[Prop.\,3.9]{hess}. In both cases, one sees that the RHS in 
Theorem~\ref{thm:all-classical}(i) strictly decreases.

2) For an arbitrary simple $\g$, we consider a non-trivial representation $\nu:\g\to\slv$. If 
$Ge'\subset\ov{Ge}\setminus Ge$, then $SL(\BV)e'\subset \ov{SL(\BV)e}$. By a result of 
Richardson~\cite{ri67}, each irreducible component of $SL(\BV)e\cap\g$ is a (nilpotent) $G$-orbit. This 
also implies that $SL(\BV)e'\ne SL(\BV)e$. Hence
\[
    \ind(\GR{A}{1}(e')\hookrightarrow \g)=
    \frac{\ind(\GR{A}{1}(e')\hookrightarrow \slv)}{\ind(\g\hookrightarrow\slv)}<
    \frac{\ind(\GR{A}{1}(e)\hookrightarrow \slv)}{\ind(\g\hookrightarrow\slv)}=
    \ind(\GR{A}{1}(e)\hookrightarrow \g).  \qedhere
\]
\end{proof}

The  index of a subalgebra can be used for obtaining non-trivial combinatorial identities. 
Taking different $\g$-modules $M$  in Eq.~\eqref{eq:various-V} yields different 
expressions for $\ind(\es\hookrightarrow\g)$. If $\g=\g(\BV)$, then $\ind(\es\hookrightarrow\g)$
can be related to $\ind_D(\es,\BV)$ and 
there are two natural choices of test representations: 
the simplest representation, $M=\BV$, and the adjoint representation, $M=\g(\BV)$. 
Alternatively,  one can apply
Lemma~\ref{lm:new} to $\g=\g(\BV)$ and $M=\BV$. Anyway, the output is as follows:

\textbullet\quad If $\g=\slv$, then $\nu=\mathsf{id}$, $\ind_D(\slv,\BV)=1$, $h^*(\slv)=\dim\BV$, and
\beq    \label{eq:N1}
  \ind_D(\es,\BV)=\frac{\ind_D(\es,\slv)}{2\dim\BV} .
\eeq

\textbullet\quad If $\g=\spv$ and $\nu: \spv\to \slv$, then $\ind_D(\spv,\BV)=1$, 
$h^*(\spv)=\frac{1}{2}\dim\BV+1$, and
\beq    \label{eq:N2}
  \ind_D(\es,\BV)=\frac{\ind_D(\es,\spv)}{\dim\BV+2} .
\eeq

\textbullet\quad If $\g=\sov$ and $\nu: \sov\to \slv$, then $\ind_D(\sov,\BV)=2$, 
$h^*(\sov)=\dim\BV-2$, and
\beq    \label{eq:N3}
  \ind_D(\es,\BV)=\frac{\ind_D(\es,\sov)}{\dim\BV-2} .
\eeq

\subsection{Combinatorial identities related to $\g(\BV)$ and $\es\simeq\tri$}   \label{subs:classical} 
\leavevmode\par
If $\es \simeq\tri$ and a nonzero nilpotent element of $\es$ has the Jordan normal form with partition
$\boldsymbol{\lb}=(\lb_1,\lb_2,\dots)$, then $\sum_i\lb_i=\dim\BV$ and 
$\BV\vert_\es=\oplus_i \sfr_{\lb_i-1}$. In particular,
$\ind_D(\es,\BV)=\sum_i \genfrac{(}{)}{0pt}{}{\lb_i+1}{3}$, regardless of the type of $\g(\BV)$.
For each $\g(\BV)$, we use below the simple relation between the $\g(\BV)$-modules $\BV$ and $\g(\BV)$. 

1) $\g=\slv$. 
Using the Clebsch-Gordan formula, we obtain
\[
\mathfrak{gl}(\BV)\vert_\es=\BV\otimes\BV^*\vert_\es=\bigoplus_{i,j} \bigl(\sfr_{\lb_i-1}\otimes \sfr_{\lb_j-1}\bigr)=
\bigoplus_{i,j}\bigoplus_{k=0}^{\min\{\lb_i-1,\lb_j-1\}} \sfr_{\lb_i+\lb_j-2-2k} .
\]
Since $\mathfrak{gl}(\BV)$ and $\slv$ differ by a trivial $\g$-module, we have
$\ind_D(\es, \mathfrak{gl}(\BV))=\ind_D(\es,\slv)$. Then using Eq.~\eqref{eq:N1}, we obtain,  for an 
arbitrary partition $\boldsymbol{\lb}=(\lb_1,\lb_2,\dots)$, the identity
\[
   \sum_i \genfrac{(}{)}{0pt}{}{\lb_i+1}{3}=\frac{1}{2\sum_i \lb_i}
   \sum_{i,j}\sum_{k=0}^{\min\{\lb_i-1,\lb_j-1\}}\genfrac{(}{)}{0pt}{}{\lb_i+\lb_j-2k}{3} .
\]
In particular, for a principal nilpotent element $e\in\slv$, we have $\boldsymbol{\lb}(e)=(\dim\BV)=(N)$, 
and the identity reads
\[
   \genfrac{(}{)}{0pt}{}{N+1}{3}=\frac{1}{2N}\sum_{k=0}^{N-1} \genfrac{(}{)}{0pt}{}{2N-2k}{3} .
\]

2) $\g=\spv$. Here
\[
   \spv\vert_\es=\eus S^2(\BV\vert_\es)=\bigoplus_{i<j} \bigl(\sfr_{\lb_i-1}\otimes \sfr_{\lb_j-1}\bigr)
   \oplus \bigoplus_i \eus S^2(\sfr_{\lb_i-1}) \ 
\]
and $\eus S^2(\sfr_m)=\sfr_{2m}\oplus\sfr_{2m-4}\oplus\dots$ by a variation of the Clebsch-Gordan formula. Using Eq.~\eqref{eq:N2}, we then obtain
the ``symplectic identity'' 
\[
   \sum_i \genfrac{(}{)}{0pt}{}{\lb_i+1}{3}=\frac{1}{(\sum_i \lb_i)+2}\left(
   \sum_{i<j}\sum_{k=0}^{\lb_j-1}\genfrac{(}{)}{0pt}{}{\lb_i+\lb_j-2k}{3}+
    \sum_{i}\sum_{k=0}^{[\lb_i-1/2]}\genfrac{(}{)}{0pt}{}{2\lb_i-4k}{3}\right) ,
\]
where we use the fact that $\min\{\lb_i-1,\lb_j-1\}=\lb_j-1$ if $i<j$.
For instance, $\boldsymbol{\lb}(e)=(\dim\BV)=(2n)$ for a principal nilpotent element $e\in\spv$, and
the identity reads
\[
   \genfrac{(}{)}{0pt}{}{2n+1}{3}=\frac{1}{2n+2}\sum_{k=0}^{n-1} \genfrac{(}{)}{0pt}{}{4n-4k}{3} .
\]

3) $\g=\sov$. Here $\sov\simeq \wedge^2(\BV)$
and $\wedge^2(\sfr_m)=\sfr_{2m-2}\oplus\sfr_{2m-6}\oplus\dots$. Then using 
Eq.~\eqref{eq:N3} we obtain 
the ``orthogonal identity''
\[
   \sum_i \genfrac{(}{)}{0pt}{}{\lb_i+1}{3}=
   \frac{1}{(\sum_i \lb_i)-2}\left(
   \sum_{i<j}\sum_{k=0}^{\lb_j-1}\genfrac{(}{)}{0pt}{}{\lb_i+\lb_j-2k}{3}+
    \sum_{i}\sum_{k=1}^{[\lb_i/2]}\genfrac{(}{)}{0pt}{}{2\lb_i+2-4k}{3}\right) .
\]
In particular, if $\dim\BV=2n$, then 
$\boldsymbol{\lb}(e)=(2n-1,1)$ for a principal nilpotent element $e\in\sov$, and the identity is
\[
   \genfrac{(}{)}{0pt}{}{2n}{3}=\frac{1}{2n-2}\left( \genfrac{(}{)}{0pt}{}{2n}{3}+
   \sum_{k=1}^{n-1} \genfrac{(}{)}{0pt}{}{4n-4k}{3}\right) .
\]

\section{On the index of principal and subregular $\tri$-subalgebras} 
\label{sect:algebra}

\noindent 
If $e\in\g$ is a  {\it principal\/} (= {\it regular\/}) nilpotent element, then the corresponding 
$\tri$-subalgebras are also called {\it principal}. We refer to \cite[n.\,29]{dy} and 
\cite[Sect.\,5]{ko59} for properties of principal $\tri$-subalgebras.
The set of non-regular nilpotent elements contains a dense $G$-orbit~\cite[4.2]{CM}. The elements of this orbit and
corresponding $\tri$-subalgebras are said to be {\it subregular}. 
Write $(\tri)^{pr}$ (resp. $(\tri)^{sub}$) for a principal (resp. subregular) 
$\tri$-subalgebra of $\g$.
In \cite{D-ind}, we obtained a uniform expression for 
$\ind((\tri)^{pr}\hookrightarrow \g)$.  To recall it, we need some notation.

Let $\theta_s$ denote the short dominant
root in $\Delta^+$. (In the simply-laced case, we assume that  $\theta=\theta_s$.)
Set $r=\|\theta\|^2/\|\theta_s\|^2 \in\{1,2,3\}$.
Along with $\g$, we also consider the Langlands
dual  algebra $\g^\vee$, which is determined by the dual root system $\Delta^\vee$.
Since the Weyl groups of $\g$ and $\g^\vee$ are isomorphic, we have $h(\g)=h(\g^\vee)$.
However, the dual Coxeter numbers can be different (cf. $\GR{B}{n}$ and $\GR{C}{n}$). 
The half-sum of the positive roots for $\g^\vee$ is
\[
   \rho^\vee:=\frac{1}{2}\sum_{\gamma>0} \gamma^\vee=\sum_{\gamma>0}\frac{\gamma}{
(\gamma,\gamma)_\g} .
\] 
It is well-known (and easily verified) that $(\rho^\vee,\gamma)_\g=\hot(\gamma)$ for any 
$\gamma\in\Delta^+$. (This equality does not depend on the normalisation of a bilinear form on $\eus E$.)
It follows that $h^*(\g^\vee)=1+(\rho^\vee, \theta_s)=1+\hot(\theta_s)$. Our first uniform expression is

\begin{thm}[{\cite[Theorem\,3.2]{D-ind}}]   \label{thm:old-main}
$\ind((\tri)^{pr}\hookrightarrow \g)=\displaystyle\frac{\dim\g}{6}h^*(\g^\vee) r$.
\end{thm}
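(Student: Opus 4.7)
The plan is to compute $\ind((\tri)^{pr}\hookrightarrow\g)$ straight from the definition $\ind=(h,h)_\g/(h,h)_{\tri}$, where $\{e,h,f\}$ is a principal $\tri$-triple, and then reduce the resulting scalar square of $\rho^\vee$ to the shape claimed by applying the Freudenthal--de~Vries strange formula of Remark~\ref{rem:strange}, but to $\g^\vee$ in place of $\g$.

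First, I would identify the semisimple element $h\in\te$ of the principal $\tri$-triple. Because the triple is principal, $\alpha_i(h)=2$ for every simple root $\alpha_i$, so under the isomorphism $\te\cong\eus E$ induced by $(\ ,\ )_\g$ the image of $h$ is the unique $\xi\in\eus E$ with $(\xi,\alpha_i)_\g=2$ for all $i$. The identity $(\rho^\vee,\gamma)_\g=\hot(\gamma)$ recalled in the paper specialises to $(\rho^\vee,\alpha_i)_\g=1$, so $\xi=2\rho^\vee$. Inside $\tri$ itself $h$ is the simple coroot, giving $(h,h)_{\tri}=2$, and hence
\[
\ind((\tri)^{pr}\hookrightarrow\g)=\frac{(2\rho^\vee,2\rho^\vee)_\g}{2}=2(\rho^\vee,\rho^\vee)_\g.
\]

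In the paper's convention, $\rho^\vee$ is literally the half-sum of positive roots of $\g^\vee$, viewed in the common space $\eus E$. Applying the strange formula to $\g^\vee$ with its own normalised form (long roots of $\g^\vee$ of norm-square $2$) gives $(\rho^\vee,\rho^\vee)_{\g^\vee}=\frac{\dim\g}{12}h^*(\g^\vee)$. To pass from $(\ ,\ )_{\g^\vee}$ to $(\ ,\ )_\g$, I observe that the long roots of $\g^\vee$ are the short coroots $\theta_s^\vee=2\theta_s/(\theta_s,\theta_s)_\g$ of $\g$, and a direct computation yields $(\theta_s^\vee,\theta_s^\vee)_\g=4/(\theta_s,\theta_s)_\g=2r$ (since $(\theta_s,\theta_s)_\g=2/r$). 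Proportionality of the two $W$-invariant forms then gives $(\ ,\ )_\g=r\cdot(\ ,\ )_{\g^\vee}$, whence $(\rho^\vee,\rho^\vee)_\g=\frac{\dim\g}{12}h^*(\g^\vee)\,r$, and substituting back yields the claimed $\frac{\dim\g}{6}h^*(\g^\vee)r$.

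The main obstacle is purely bookkeeping: two distinct normalisations of a $W$-invariant bilinear form live on the common space $\eus E$, and one must keep straight the paper's convention of treating coroots $\gamma^\vee=2\gamma/(\gamma,\gamma)_\g$ as elements of $\eus E$ (so that $\rho^\vee$ coincides with $\rho$ for $\g^\vee$) while tracking the factor $r$ produced by the change of normalisation. Once these identifications are pinned down, the only substantive input is the strange formula, and the argument collapses to the three lines above.
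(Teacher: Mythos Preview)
Your argument is correct. Note, however, that in this paper Theorem~\ref{thm:old-main} is merely \emph{cited} from \cite{D-ind}; there is no proof given here. What the present paper does prove is Theorem~\ref{thm:new-main}, and its proof runs in the \emph{opposite} direction from yours: it takes Theorem~\ref{thm:old-main} as known input, computes $2(\rho^\vee,\rho^\vee)_\g=\frac{\dim\g}{6}\,r\,h^*(\g^\vee)$ via the strange formula for $\g^\vee$ (exactly your second step, with the same observation that $(\mu^\vee,\mu^\vee)_\g=2r$ for short $\mu$), and then concludes that $2(\rho^\vee,\rho^\vee)_\g$ equals the index.

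Your contribution is the first step, the direct identification $h\leftrightarrow 2\rho^\vee$ under $(\ ,\ )_\g$, which yields $\ind((\tri)^{pr}\hookrightarrow\g)=2(\rho^\vee,\rho^\vee)_\g$ straight from Definition~\ref{def:main}. This makes the whole argument self-contained: you prove Theorem~\ref{thm:old-main} and the first equality of Theorem~\ref{thm:new-main} simultaneously, without appealing to \cite{D-ind}. The strange-formula half of your argument is identical to the paper's computation in the proof of Theorem~\ref{thm:new-main}.
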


Below, we give yet another expression for this index. Let $\Delta^+_l$ (resp. $\Delta^+_s$) be the set
of long (resp. short) positive roots. In the simply-laced case, all roots are assumed to be short and 
$r=1$.

\begin{thm}   \label{thm:new-main}
 $\ind((\tri)^{pr}\hookrightarrow \g)=2(\rho^\vee,\rho^\vee)_\g=
 \displaystyle\sum_{\gamma\in\Delta^+_l}\hot(\gamma)+r\sum_{\gamma\in\Delta^+_s}\hot(\gamma)$.
\end{thm}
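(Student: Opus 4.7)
The plan is to derive both equalities purely from Theorem~\ref{thm:old-main} and the ``strange formula'' recalled in Remark~\ref{rem:strange}, without any new input from $\tri$-theory. The two equalities are of a different nature: the second is a direct book-keeping calculation using the definition of $\rho^\vee$, while the first requires translating the strange formula across the Langlands duality.

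\emph{Step 1 (the second equality).} I would expand
\[
(\rho^\vee,\rho^\vee)_\g = \Bigl(\sum_{\gamma>0}\frac{\gamma}{(\gamma,\gamma)_\g},\,\rho^\vee\Bigr)_\g = \sum_{\gamma>0}\frac{(\rho^\vee,\gamma)_\g}{(\gamma,\gamma)_\g} = \sum_{\gamma>0}\frac{\hot(\gamma)}{(\gamma,\gamma)_\g},
\]
using the identity $(\rho^\vee,\gamma)_\g=\hot(\gamma)$ already noted in the text. Since $(\gamma,\gamma)_\g=2$ for long $\gamma$ and $(\gamma,\gamma)_\g=2/r$ for short $\gamma$, multiplying by $2$ yields exactly
\[
2(\rho^\vee,\rho^\vee)_\g = \sum_{\gamma\in\Delta^+_l}\hot(\gamma) + r\sum_{\gamma\in\Delta^+_s}\hot(\gamma).
\]

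\emph{Step 2 (the first equality).} Here I would apply the normalisation-free version of the strange formula (Remark~\ref{rem:strange}) to the Langlands dual algebra $\g^\vee$, but using the bilinear form $(\ ,\ )_\g$ (inherited on $\eus E$) as the chosen $W$-invariant form on the coroot space. The point is that the long roots of $\g^\vee$ are precisely the coroots $\gamma^\vee$ of the short roots $\gamma\in\Delta_s$ of $\g$, and for such $\gamma$ one has $\gamma^\vee=r\gamma$, so
\[
(\gamma^\vee,\gamma^\vee)_\g = r^2\cdot\tfrac{2}{r}=2r.
\]
Plugging $c=2r$ into Remark~\ref{rem:strange} applied to $\g^\vee$ (and noting $\dim\g^\vee=\dim\g$) gives
\[
(\rho^\vee,\rho^\vee)_\g = \tfrac{\dim\g}{24}\,h^*(\g^\vee)\cdot 2r,
\]
hence $2(\rho^\vee,\rho^\vee)_\g = \tfrac{\dim\g}{6}h^*(\g^\vee)\,r$, which coincides with the formula of Theorem~\ref{thm:old-main}.

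\emph{Where the work is.} Step~1 is routine once the two possible values of $(\gamma,\gamma)_\g$ are listed. The only conceptually delicate point is Step~2, namely correctly identifying the long roots of $\g^\vee$ and the value of $c$ when the form used is $(\ ,\ )_\g$ rather than the normalised form $(\ ,\ )_{\g^\vee}$; this is what produces the factor $r$ that is needed to reconcile $2(\rho^\vee,\rho^\vee)_\g$ with the right-hand side of Theorem~\ref{thm:old-main}. Once this duality bookkeeping is settled, the theorem is immediate.
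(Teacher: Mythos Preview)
Your proposal is correct and follows essentially the same route as the paper's own proof: both establish the second equality by expanding $\rho^\vee$ in terms of roots and using $(\rho^\vee,\gamma)_\g=\hot(\gamma)$ together with $(\gamma,\gamma)_\g\in\{2,2/r\}$, and both obtain the first equality by applying the strange formula (Remark~\ref{rem:strange}) to $\g^\vee$ with the form $(\ ,\ )_\g$, computing $c=(\mu^\vee,\mu^\vee)_\g=2r$ for a short $\mu$, and then invoking Theorem~\ref{thm:old-main}. The only cosmetic difference is that the paper first rewrites $2\rho^\vee=\sum_{\Delta^+_l}\gamma+r\sum_{\Delta^+_s}\mu$ and then pairs with $\rho^\vee$, whereas you split the sum after pairing; the content is identical.
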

\begin{proof} In view of our choice of the form $(\ ,\ )_\g$, we have
\[
   2\rho^\vee=\sum_{\gamma\in\Delta^+}\frac{2\gamma}{(\gamma,\gamma)_\g}=
   \sum_{\gamma\in\Delta^+_l}\gamma+r \sum_{\mu\in\Delta^+_s}\mu .
\]
Consequently,  
\[
   2(\rho^\vee,\rho^\vee)_\g=
   (\rho^\vee, \sum_{\gamma\in\Delta^+_l}\gamma+r \sum_{\mu\in\Delta^+_s}\mu)_\g=
   \sum_{\gamma\in\Delta^+_l}\hot(\gamma)+r\sum_{\gamma\in\Delta^+_s}\hot(\mu) ,
\]
which yields the second equality. 

Now, we obtain another expression for $(\rho^\vee,\rho^\vee)_\g$ applying the ``strange formula'' of
Freudenthal-de Vries to $\Delta^\vee$ and $\g^\vee$, cf. Remark~\ref{rem:strange}. If $\mu\in\Delta_s$, then $\mu^\vee$ is a long
root in $\Delta^\vee$ and $(\mu^\vee,\mu^\vee)_\g=2r$. Therefore,
$2(\rho^\vee,\rho^\vee)_\g=\displaystyle2\frac{\dim(\g^\vee)}{24}2r h^*(\g^\vee)=
\frac{\dim\g}{6}r h^*(\g^\vee)$, which is exactly the index of $(\tri)^{pr}$.
\end{proof}

\begin{rmk}
It was noticed in \cite{D-ind} that the index of $(\tri)^{pr}$ is preserved under the unfolding procedure
$\g\leadsto \tilde\g$ applied to the multiply laced Dynkin diagram,
the four pairs $(\g,\tilde\g)$ being $(\GR{C}{n},\GR{A}{2n-1})$,  $(\GR{B}{n},\GR{D}{n+1})$, 
$(\GR{F}{4},\GR{E}{6})$, $(\GR{G}{2},\GR{D}{4})$. Using Theorem~\ref{thm:new-main}, we may look at
this coincidence from another angle.
Let $\tilde\Delta$ be the root system of $\tilde\g$ with respect to a Cartan subalgebra $\tilde\te$.
The embedding $\te\to\tilde\te$ induces a surjective map $\pi:\tilde\Delta^+\to \Delta^+$ such that
$\pi^{-1}(\Delta^+_l)\to \Delta^+_l$ is one-to-one and $\#\pi^{-1}(\gamma)=r$ for $\gamma\in\Delta^+_s$.
Furthermore, $\pi$ is height-preserving. Thus, we get
the natural equality
$
      \sum_{\gamma\in\Delta^+_l}\hot(\gamma)+r\sum_{\gamma\in\Delta^+_s}\hot(\mu)=
      \sum_{\tilde\gamma\in\tilde\Delta^+}\hot(\tilde\gamma)$, which again "explains" the coincidence of two indices.
\end{rmk}

Our next goal is to provide a simple uniform expression for the difference of the indices of subalgebras
$(\tri)^{pr}$ and $(\tri)^{sub}$. To this end, we need the relationship between the structure of 
$\g$ as the module over $(\tri)^{pr}$ or $(\tri)^{sub}$, see e.g.~\cite[Ch.\,7]{slodowy}.
Let $m_1,\dots,m_n$ be the exponents of $\g$. As was shown by Kostant~\cite{ko59}, 
\beq             \label{eq:restr-pr}
    \g\vert_{(\tri)^{pr}}=\bigoplus_{i=1}^n \sfr_{2m_i} .
\eeq
To deal with the subregular $\tri$-subalgebras, 
we may assume that $n=\rk(\g)\ge 2$ and also $1=m_1<m_2\le \dots \le m_{n-1}<m_n=h(\g)-1$.
Then 
\beq             \label{eq:restr-sub}
    \g\vert_{(\tri)^{sub}}=\left(\bigoplus_{i=1}^{n-1} \sfr_{2m_i}\right)\oplus 
    \sfr_{a-2}\oplus\sfr_{b-2}\oplus\sfr_{h(\g)-2} ,
\eeq
where $a+b=h(\g)+2$. Assume that $a\le b$ and note that $(a,b,h(\g))$ are just $(w_r,w_{r+1},w_{r+2})$ in \cite[p.\,112]{slodowy}.
Below, we write $h$ and $h^*$ for $h(\g)$ and $h^*(\g)$, respectively.
\begin{thm}            \label{thm:difference}
$D:=\ind((\tri)^{pr}\hookrightarrow \g)-\ind((\tri)^{sub}\hookrightarrow \g)=
\displaystyle\frac{h}{h^*}\bigl(\genfrac{(}{)}{0pt}{}{h}{2}+\frac{(a-2)(b-2)}{4}\bigr)$.
\end{thm}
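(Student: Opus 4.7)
The plan is to reduce the computation of $D$ to a binomial identity by applying Eq.~\eqref{eq:ind-subalg2} with $\es=(\tri)^{pr}$ and $\es=(\tri)^{sub}$, so that
\[
   D \;=\; \frac{1}{2h^*}\bigl(\ind_D((\tri)^{pr},\g)-\ind_D((\tri)^{sub},\g)\bigr).
\]
Thus everything reduces to computing these two Dynkin indices of the adjoint $\tri$-representations, and this is where the decompositions \eqref{eq:restr-pr} and \eqref{eq:restr-sub} enter. By the additivity of $\ind_D$ together with Example~\ref{ex:1}(1), for any $\tri$-module $\bigoplus_j \sfr_{d_j}$ we have $\ind_D\bigl(\tri,\bigoplus_j\sfr_{d_j}\bigr)=\sum_j\genfrac{(}{)}{0pt}{}{d_j+2}{3}$.

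Applying this to the two decompositions, the first $n-1$ summands $\sfr_{2m_i}$ ($i<n$) are common and cancel in the difference. What remains is the contribution of $\sfr_{2m_n}$ in the principal case versus $\sfr_{a-2}\oplus\sfr_{b-2}\oplus\sfr_{h-2}$ in the subregular case. Since $m_n=h-1$, so that $2m_n+2=2h$, I obtain
\[
   \ind_D((\tri)^{pr},\g)-\ind_D((\tri)^{sub},\g)=
   \genfrac{(}{)}{0pt}{}{2h}{3}-\genfrac{(}{)}{0pt}{}{h}{3}-\genfrac{(}{)}{0pt}{}{a}{3}-\genfrac{(}{)}{0pt}{}{b}{3}.
\]
At this point the Lie-theoretic input is fully used; the rest is combinatorics in $a,b,h$ with the constraint $a+b=h+2$.

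The last step is to verify the elementary identity
\[
   \genfrac{(}{)}{0pt}{}{2h}{3}-\genfrac{(}{)}{0pt}{}{h}{3}-\genfrac{(}{)}{0pt}{}{a}{3}-\genfrac{(}{)}{0pt}{}{b}{3}
   \;=\; 2h\,\genfrac{(}{)}{0pt}{}{h}{2}+\frac{h(a-2)(b-2)}{2}
\]
whenever $a+b=h+2$; dividing by $2h^*$ then gives the claimed formula for $D$. I would carry this out by expressing $\binom{a}{3}+\binom{b}{3}$ as a symmetric polynomial in $s=a+b$ and $p=ab$, substituting $s=h+2$, and then using $(a-2)(b-2)=p-2h$ to rewrite the right-hand side. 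The resulting polynomial identity in $h$ and $p$ is linear in $p$ and boils down to a direct calculation, which I would present in one or two lines.

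The proof presents no real obstacle: once the correct Lie-theoretic reductions are in place (namely Eq.~\eqref{eq:ind-subalg2}, Example~\ref{ex:1}(1), and the decompositions \eqref{eq:restr-pr}--\eqref{eq:restr-sub}), everything is forced. The only mildly delicate point is keeping track of which summand in \eqref{eq:restr-sub} replaces $\sfr_{2m_n}$ in \eqref{eq:restr-pr} and correctly using $m_n=h-1$; the concluding binomial identity is routine. The elegance of the final formula comes from the cancellation of the common $\sfr_{2m_i}$ summands, which eliminates all dependence on the exponents $m_1,\dots,m_{n-1}$.
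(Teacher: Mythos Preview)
Your proposal is correct and follows exactly the same route as the paper: apply Eq.~\eqref{eq:ind-subalg2} and Example~\ref{ex:1}(1) to the decompositions \eqref{eq:restr-pr}--\eqref{eq:restr-sub}, cancel the common summands to reach $D=\frac{1}{2h^*}\bigl(\binom{2h}{3}-\binom{h}{3}-\binom{a}{3}-\binom{b}{3}\bigr)$, and then simplify algebraically using $a+b=h+2$. The only cosmetic difference is that the paper organises the final simplification around $(a-1)(b-1)$ rather than the pair $(s,p)=(a+b,ab)$ you suggest, but this is the same routine computation.
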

\begin{proof}  If $\g\vert_{\tri}=\oplus_j \sfr_{n_j}$, then Eq.~\eqref{eq:ind-subalg2} shows that
$\ind(\tri\hookrightarrow\g)=\frac{1}{2h^*} \sum_j \genfrac{(}{)}{0pt}{}{n_j+2}{3}$. Therefore,
by Eq.~\eqref{eq:restr-pr} and \eqref{eq:restr-sub}, the difference $D$ equals
\[
   \frac{1}{2h^*}(\genfrac{(}{)}{0pt}{}{2h}{3}-\genfrac{(}{)}{0pt}{}{h}{3}-\genfrac{(}{)}{0pt}{}{a}{3}-
   \genfrac{(}{)}{0pt}{}{b}{3}) .
\]
Then routine transformations, where we repeatedly use the relation $(a-1)+(b-1)=h$, simplify this expression to the desired form. For instance,  we first transform
$\genfrac{(}{)}{0pt}{}{a}{3}+\genfrac{(}{)}{0pt}{}{b}{3}$ into $\frac{h}{6}(h^2-3(a-1)(b-1)-1)$, etc.
\end{proof}

\noindent
In the following table, we gather the relevant data for all simple Lie algebras.

\begin{center}
\begin{tabular}{|c|ccccccccc|}
\hline
$\g$ & $\GR{A}{n}$ & $\GR{B}{n}$ & $\GR{C}{n}, n{\ge}3$ & $\GR{D}{n}, n{\ge}4$ & $\GR{E}{6}$ & $\GR{E}{7}$ &  $\GR{E}{8}$ & $\GR{F}{4}$ & $\GR{G}{2}$ \\ \hline 
$\ind((\tri)^{pr}{\hookrightarrow}\g)$ & \rule{0pt}{2.7ex}$\genfrac{(}{)}{0pt}{}{n+2}{3}$ &
$\frac{1}{2}\genfrac{(}{)}{0pt}{}{2n+2}{3}$ 
& $\genfrac{(}{)}{0pt}{}{2n+1}{3}$ & $\frac{1}{2}\genfrac{(}{)}{0pt}{}{2n}{3}$ 
&
$156$ & $399$ & $1240$ & $156$ & $28$ \\
$D$ & $\rule{0pt}{2.7ex}\genfrac{(}{)}{0pt}{}{n+1}{2}$ & $2n^2$ & $4n(n{-}1)$ & $2n(n{-}2)$ & 
$72$ & $168$ & $480$ & $96$ & $24$ \\  
$a$ & $2$ & $2$ & $4$ & $4$ & $6$ & $8$ & $12$ & $6$ & $4$ \\
$b$ & $n{+}1$ & $2n$ & $2n{-}2$ & $2n{-}4$ & $8$ & $12$ & $20$ & $8$ & $4$ \\ 
$D/b{\cdot}\rk(\g)$ & $1/2$ & $1$ & $2$ & $1$ & $3/2$ & $2$ & $3$ & $3$ & $3$ \\
\hline
 \end{tabular}
\end{center}
\begin{rmk}
The numbers $(a,b)$ frequently occur in the study of the McKay correspondence and finite 
subgroups of $SL_2$, see e.g.~\cite{ko06}. Recall that Slodowy associates a finite subgroup of $SL_2$ to any $\g$ (not only of type {\sf A-D-E}) \cite[6.2]{slodowy}. Let $\tilde\Gamma\subset SL_2$ be the finite subgroup corresponding to $\g$.
Then (i) $ab/2=\#\tilde\Gamma$, (ii)  $\{a,b,h\}$ are the degrees of basic invariants for the associated
2-dimensional representation of $\tilde\Gamma$, and (iii) the Poincar\'e series of this ring of invariants is
$\displaystyle \frac{1+T^h}{(1-T^a)(1-T^b)}$. Using the first relation, one can also write
\beq   \label{eq:another-D}
D=\frac{h}{h^*}{\cdot}\frac{h(h-2)+\#\tilde\Gamma}{2} .
\eeq
\end{rmk}

\begin{rmk}    \label{rem:empirical}
Let us point out some curious observations related to $D$.

\textbullet \quad It is always true that $D \le 2h{\cdot}\rk(\g)$, and the equality holds if and only if
$\g$ is of type $\GR{G}{2},\GR{F}{4},\GR{E}{8}$. Furthermore, if $h$ is even (which only excludes the case of $\GR{A}{2n}$), then $D/\rk(\g)$ is an integer.

\textbullet \quad It is always true that $D \le 3b{\cdot}\rk(\g)$, and the equality holds if and only if
$\g$ is of type $\GR{G}{2},\GR{F}{4},\GR{E}{8}$. Moreover, for each classical series, the ratio
$D/b{\cdot}\rk(\g)$ is constant. 

It might be interesting to find an explanation for these properties and understand the meaning of the constant $D/b{\cdot}\rk(\g)$.
\end{rmk}
\noindent 
{\small {\bf Acknowledgements.}
I would like to thank the Mathematisches Institut der Friedrich-Schiller-Universit\"at (Jena) for the warm hospitality during the preparation of this article.}

\end{document}